\newtheorem{thm}{Theorem}[section]
\newtheorem{cor}[thm]{Corollary}
\newtheorem{lem}[thm]{Lemma}
\theoremstyle{definition}
\theoremstyle{remark}
\newcommand{\be}{\begin{equation}}
\newcommand{\ee}{\end{equation}}
\newcommand{\bea}{\begin{eqnarray}}
\newcommand{\eea}{\end{eqnarray}}
\newcommand{\ben}{\begin{eqnarray*}}
\newcommand{\een}{\end{eqnarray*}}
\newcommand{\bt}{\begin{split}}
\newcommand{\et}{\end{split}}
\newcommand{\bet}{\begin{equation}}
\newcommand{\mc}{\mathbb{C}}
\newcommand{\ra}{\rightarrow}
\begin{document}

\title[]{Fridman's invariant, squeezing functions, and exhausting domains}
\date{}
\author[F. Deng, X. Zhang]{Fusheng Deng, Xujun Zhang}
\address{Fusheng Deng: \ School of Mathematical Sciences, University of Chinese Academy of Sciences\\ Beijing 100049, China}
\email{fshdeng@ucas.ac.cn}
\address{Xujun Zhang: \ School of Mathematical Sciences, University of Chinese Academy of Sciences\\ Beijing 100049, China}
\email{zhangxujunmath@icloud.com}
%\address{Xiangyu Zhou:\ Institute of Mathematics, AMSS, and Hua Loo-Keng Key Laboratory of Mathematics, Chinese Academy of Sciences, Beijing 100190, China}
%\email{xyzhou@math.ac.cn}

\begin{abstract}
We show that if a bounded domain $\Omega$ is exhausted by a bounded strictly pseudoconvex domain $D$ with $C^2$ boundary,
then $\Omega$ is holomorphically equivalent to $D$ or the unit ball,
and show that a bounded domain has to be holomorphically equivalent to the unit ball if its Fridman's invariant
has certain growth condition near the boundary.
\end{abstract}

\maketitle
\section{Introduction}
In \cite{Fri83}, Fridman introduces a new invariant for bounded domains as follows.
Let $D\subset\mc^n$ be a bounded domain and $z\in D$.
Let $f:B^n\ra D$ be a holomorphic injective map such that $f(0)=z$,
where $B^n$ is the unit ball in $\mc^n$.
We define $q_{D,f}(z)$ to be the supremum of all $r>0$ such that $B^K_D(z,r)\subset f(B^n)$,
where $B^K_D(z,r)$ is the open ball in $D$ centered at $z$ with radius $r$ with respect to the Kobayashi metric on $D$.
Then Firdman's invariant is defined to be
$$h_D(z)=\inf_f \frac{1}{q_{D,f}(z)},$$
where the supremum is taken to be all injective holomorphic maps $f:B^n\ra D$ with $f(z)=0$.

Form the definition, it is clear that $h_D$ is invariant under biholomorphic transformations.
Some basic results about the invariant $h_D$ were proved in \cite{Fri83}.
The most important one says that for a bounded strictly pseudoconvex domain $D$ with $C^3$ boundary,
$h_D(z)$ tends to $0$ as $z$ goes to the boundary.

In \cite{Deng-Guan-Zhang12}, Deng-Guan-Zhang introduce another invariant of bounded domains, called squeezing function, as follows.
Let $D$ be a bounded domain in $\mathbb{C}^{n}$ and $z\in D$.
For an injective holomorphic map $f:D \rightarrow B^{n}$ with $f(z)=0$,
we define
$$s_{D}(p,f)=sup \left\{r| B(0,r) \subset f(D) \right\},$$
and define
$$s_D(z)=\underset{f}{sup}\left\{s_{D}(p,f) \right\},$$
where the supremum is taken over all injective holomorphic maps $f:D \rightarrow B^{n}$ with $f(p)=0$, and $B^{n}(0,r)$
is the Euclidean ball in $\mathbb{C}^{n}$ with center 0 and radius $r$.
As $z$ varies, we get a function $s_D$ on $D$, which is called the squeezing function of $D$.

By definition, a bounded domain is called \emph{homogenous regular}
if its squeezing function has positive lower bound.
The notion of homogenous regular domains was introduced and studied in \cite{Liu-Sun-Yau04, Liu-Sun-Yau05},
and was called uniform squeezing domains and systematically studied in \cite{Yeung09}.

In addition to the works in \cite{Liu-Sun-Yau04}\cite{Liu-Sun-Yau05}\cite{Yeung09},
the new motivation for introducing the concept of squeezing function
is boundary estimate.
The study of boundary estimate of squeezing functions was initiated in \cite{Deng-Guan-Zhang12}
and was further developed in \cite{Diedrich-Fornaess-Wold13} \cite{Deng-Guan-Zhang16}.
In recent years, boundary estimate of squeezing functions and their applications
in different settings were extensively studied by different authors
(see e.g. \cite{Diedrich-Fornaess-Wold16}\cite{Fornaess-Wold15}\cite{Diederich-Fornaess15}\cite{Fornaess-Shcherbina}
\cite{Fornaess-Rong16}\cite{Zimmer16}\cite{Nikolov-Andreev16}\cite{Fornaess-Wold16-preprint}\cite{Joo-Kim16}
\cite{Zhang15}\cite{Zimmer17}\cite{Nikolov17}\cite{Bracci-Fornaess-Wold17}\cite{Arosio-Fornaess-Shcherbina-Wold17}\cite{Zimmer18}\cite{Nik-Try18}).

One motivation for Fidman to propose the invariant $h_D$ defined above
is to study exhausting domains.
Let $D, \Omega$ be domains in $\mc^n$.
Following Fridman, we say that $\Omega$ can be exhausted by $D$ or $D$ can exhaust $\Omega$ if for any compact subset $K$ of $\Omega$
there exists an injective holomorphic map $f:D\ra\Omega$ such that $K\subset f(D)$.

It is proved by Fridman that there are domains in $\mc^m$
which can exhaust all domains \cite{Fri83}.
It is also observed by Forn{\ae}ss that the unit ball $B^n$
can exhaust many domains which are not biholomorphic to each other \cite{For04}.
However, if we restrict on bounded domains, things become very different.
For example, it is easy to prove that if a bounded domain $\Omega$ can be exhausted
by a homogenous domain $D$, then $\Omega$ must be holomorphic equivalent to $D$ (see \cite{Fri83}).

In \cite{Fri83}, Fridman shows that if a bounded domain $\Omega$ can be exhausted by
a bounded strictly pseudoconvex domain $D$ with $C^3$ boundary,
then $\Omega$ must be biholomorphic to $D$ or the unit ball $B^n$.
The proof is based on the boundary estimate of $h_D$ for strictly pseudoconvex domains with $C^3$ bounday.
In this note, applying the estimate of squeezing functions in \cite{Deng-Guan-Zhang16}
and based on Firdman's idea, we show that the same result still holds if the boundary regularity of $D$ is reduced to $C^2$.

\begin{thm}\label{thm-intr:exhauting domain}
Let $D$ be a bounded strictly pseudoconvex domain in $\mc^n$ with $C^2$ boundary.
If a bounded domain $\Omega\subset\mc^n$ can be exhausted by $D$,
then $\Omega$ must be biholomorphic to $D$ or the unit ball $B^n$.
\end{thm}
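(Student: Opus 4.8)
The plan is to carry out Fridman's strategy while substituting the $C^2$ squeezing estimate of \cite{Deng-Guan-Zhang16} for the $C^3$ boundary estimate of $h_D$ he used. Fix $z_0\in\Omega$ and a compact exhaustion $K_1\Subset K_2\Subset\cdots$ of $\Omega$ with $z_0\in\operatorname{int}K_1$, and choose injective holomorphic $f_j\colon D\ra\Omega$ with $K_j\subset f_j(D)$. Put $w_j=f_j^{-1}(z_0)\in D$. After passing to a subsequence, either (1) $w_j$ stays in a compact subset of $D$, so $w_j\ra w_0\in D$, or (2) $w_j\ra\partial D$. I expect to prove $\Omega\cong D$ in Case (1) and $\Omega\cong B^n$ in Case (2).

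In Case (1): since $\Omega$ is bounded, a subsequence of $(f_j)$ converges locally uniformly to a holomorphic $F\colon D\ra\overline\Omega$ with $F(w_0)=z_0$; similarly, the maps $f_j^{-1}$ (defined on $f_j(D)\supset K_j$, uniformly bounded) converge, along a further subsequence and via a diagonal argument over the $K_j$, locally uniformly on $\Omega$ to a holomorphic $G\colon\Omega\ra\overline D$ with $G(z_0)=w_0$. Passing to the limit in $f_j^{-1}\circ f_j=\operatorname{id}$ and $f_j\circ f_j^{-1}=\operatorname{id}$ near the base points yields $G\circ F=\operatorname{id}$ near $w_0$ and $F\circ G=\operatorname{id}$ near $z_0$; hence $F$ and $G$ are nondegenerate there, so, being locally uniform limits of injective maps, are injective, and their images are open, so $G(\Omega)\subset\operatorname{int}\overline D=D$ (here $D$ has $C^2$ boundary). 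Then $F\circ G=\operatorname{id}_\Omega$ globally by the identity theorem, so in particular $F$ is onto $\Omega$, and it only remains to see $G(\Omega)=D$, i.e.\ that $F(D)=\Omega$ rather than some slightly larger set. This I would get from the Kobayashi metric: for $p\in\Omega$ one has $d^K_D(w_j,f_j^{-1}(p))=d^K_{f_j(D)}(z_0,p)\ra d^K_\Omega(z_0,p)<\infty$ (any analytic disc in $\Omega$ eventually lies in $f_j(D)$), so by completeness of the bounded strictly pseudoconvex domain $D$ the points $f_j^{-1}(p)$ stay in a compact subset of $D$; a convergent subsequence then exhibits $p$ as a value of $F$ compatibly with $G$, giving $F(D)\subset\Omega$. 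Thus $F\colon D\ra\Omega$ is biholomorphic.

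In Case (2): by \cite{Deng-Guan-Zhang16}, $s_D(w_j)\ra 1$, so there are injective holomorphic $\psi_j\colon D\ra B^n$ with $\psi_j(w_j)=0$ and $B^n(0,r_j)\subset\psi_j(D)$, $r_j\ra 1$. Set $g_j:=f_j\circ\psi_j^{-1}$, injective on $B^n(0,r_j)$, with $g_j(0)=z_0$ and inverse $g_j^{-1}=\psi_j\circ f_j^{-1}$ bounded by $1$. Running exactly the argument of Case (1) — now with a diagonal argument over the $K_j$ and over the balls $B^n(0,r)$, $r<1$ — produces locally uniform limits $G\colon B^n\ra\overline\Omega$ of $g_j$ and $h\colon\Omega\ra\overline{B^n}$ of $g_j^{-1}$, with $G(0)=z_0$, $h(z_0)=0$, nondegenerate at the base points and hence injective; the maximum principle for the plurisubharmonic function $\|h\|^2$ forces $h(\Omega)\subset B^n$, so $G\circ h=\operatorname{id}_\Omega$ globally, whence $h=G^{-1}|_\Omega$. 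So $\Omega\cong B^n$ will follow once $G(B^n)=\Omega$.

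The step I expect to be the real obstacle is exactly this surjectivity (equivalently, properness of the limit maps). One gets $\Omega\subset G(B^n)=:\Omega^{**}$ for free — for $p\in\Omega$ the preimages $g_j^{-1}(p)=\psi_j(f_j^{-1}(p))$ satisfy $d^K_{B^n}(0,g_j^{-1}(p))\le d^K_{f_j(D)}(z_0,p)\ra d^K_\Omega(z_0,p)<\infty$, so they lie in a fixed compact subset of $B^n$ — and $\Omega^{**}$ is an open set with $\Omega\subset\Omega^{**}\subset\operatorname{int}\overline\Omega$ and $\Omega^{**}\cong B^n$. One must rule out $\Omega^{**}\setminus\Omega\ne\emptyset$. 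Here I would use that $\Omega$, being exhausted by the Stein domain $D$, is itself pseudoconvex (a biholomorphic image of $D$ containing a large enough compact must, by the Kontinuit\"atssatz, contain the completion of any Hartogs configuration lying in it): a point of $\Omega^{**}\setminus\Omega$ around which the complement of $\Omega$ is too thin (say of codimension $\ge 2$) would violate pseudoconvexity of $\Omega$, while if that complement is "thick" (hypersurface-like) there, pushing a sequence $q_k\in\Omega$, $q_k\ra q\in\Omega^{**}\setminus\Omega$ through the $g_j^{-1}$ keeps $d^K_\Omega(z_0,q_k)$ bounded, contradicting that such thickness forces $d^K_\Omega(z_0,q_k)\ra\infty$. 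Hence $\Omega^{**}=\Omega$ and $G\colon B^n\ra\Omega$ is biholomorphic. Organizing this last point cleanly — extracting the precise completeness/pseudoconvexity of $\Omega$ that makes the limit maps proper — is the crux; everything else is the standard normal families/Hurwitz machinery.
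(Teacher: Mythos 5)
Your overall strategy is the paper's: split on whether the preimages $w_j=f_j^{-1}(z_0)$ accumulate in $D$ (then $\Omega\cong D$) or escape to $\partial D$ (then use the $C^2$ squeezing estimate of \cite{Deng-Guan-Zhang16} to conclude $\Omega\cong B^n$), and your Case (1) is essentially the paper's argument. But there is a genuine gap, and you have correctly located it yourself: the surjectivity $G(B^n)=\Omega$ in Case (2). The fix you sketch does not work. Your Kobayashi-distance argument gives the inclusion $\Omega\subset G(B^n)$; what is missing is the \emph{opposite} inclusion $G(B^n)\subset\Omega$ (the same direction is also garbled in your Case (1), where the chain ``$p$ is a value of $F$, hence $F(D)\subset\Omega$'' proves $\Omega\subset F(D)$, not the containment you need). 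Your proposed repair via pseudoconvexity of $\Omega$ and a ``thin versus thick'' dichotomy for $\Omega^{**}\setminus\Omega$ is not a dichotomy one can make rigorous, and the claim that a ``hypersurface-like'' piece of $\partial\Omega$ forces $d^K_\Omega(z_0,q_k)\to\infty$ is false for a general bounded domain ($\Omega$ is not known to be complete hyperbolic at this stage; that is essentially what you are trying to prove).

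The missing containment is exactly what the generalized Rouch\'e theorem for holomorphic maps (Lloyd, \cite{Llo79}) delivers, and it is the tool the paper uses in Case (1): once the locally uniform limit $G$ of the injective maps $g_j$ is known to have nonvanishing Jacobian at the base point (hence, by Hurwitz/Rouch\'e, everywhere, and $G$ is injective), for any $\zeta_0\in B^n$ one picks a small ball $B\ni\zeta_0$ with $G\neq G(\zeta_0)$ on $\partial B$ and concludes that $g_j-G(\zeta_0)$ has a zero in $B$ for large $j$, i.e.\ $G(\zeta_0)\in g_j(B)\subset f_j(D)\subset\Omega$. This closes your Case (2) (and repairs the corresponding step in Case (1)). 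Alternatively --- and this is what the paper actually does in Case (2) --- you can avoid rebuilding this machinery entirely: by biholomorphic invariance $s_{f_j(D)}(z_0)=s_D(w_j)\to 1$ by the boundary estimate of \cite{Deng-Guan-Zhang16}; by the stability of squeezing functions under exhaustion (Lemma 2.2 of the paper, also from \cite{Deng-Guan-Zhang16}) $s_\Omega(z_0)=\lim_j s_{f_j(D)}(z_0)=1$; and $s_\Omega(z_0)=1$ forces $\Omega\cong B^n$ by \cite{Deng-Guan-Zhang12}. Your hands-on construction is in effect an inline reproof of those two cited lemmas, and it is precisely at the point where their proofs are nontrivial that your argument stops.
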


Another result in this note about exhausting domains is the following

\begin{thm}\label{thm-intr:exhausted by USD}
If a bounded domain $\Omega$ can be exhausted by a homogenous regular domain,
then $\Omega$ is homogenous regular.
\end{thm}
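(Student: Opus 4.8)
The plan is to transport the squeezing maps of $D$ to $\Omega$ along the exhaustion and take a normal-families limit. Let $c:=\inf_{D}s_{D}>0$. Fix $p\in\Omega$ and a number $0<c'<c$; it suffices to produce an injective holomorphic map $F\colon\Omega\to B^{n}$ with $F(p)=0$ and $B^{n}(0,c')\subset F(\Omega)$, for then $s_{\Omega}(p)\ge c'$, and letting $c'\uparrow c$ and varying $p$ gives $\inf_{\Omega}s_{\Omega}\ge c>0$, i.e.\ $\Omega$ is homogeneous regular.

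To build $F$, fix a normal exhaustion $\Omega=\bigcup_{m}\overline{U_{m}}$ with $\overline{U_{m}}\Subset U_{m+1}$ and $p\in U_{m_{0}}$, and for $m\ge m_{0}$ an injective holomorphic $f_{m}\colon D\to\Omega$ with $\overline{U_{m}}\subset f_{m}(D)$. Put $z_{m}:=f_{m}^{-1}(p)\in D$; since $s_{D}(z_{m})\ge c>c'$, choose an injective holomorphic $g_{m}\colon D\to B^{n}$ with $g_{m}(z_{m})=0$ and $B^{n}(0,c')\subset g_{m}(D)$. Then $h_{m}:=g_{m}\circ f_{m}^{-1}$ is a biholomorphism of $f_{m}(D)$ onto $g_{m}(D)\subset B^{n}$, with $h_{m}(p)=0$ and $B^{n}(0,c')\subset h_{m}(f_{m}(D))$; equivalently $h_{m}^{-1}\colon B^{n}(0,c')\to f_{m}(D)\subset\Omega$ is injective holomorphic with $h_{m}^{-1}(0)=p$. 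Because $h_{m}^{-1}$ takes values in the fixed bounded set $\Omega$, the Schwarz (Cauchy) estimate gives $\|(h_{m}'(p))^{-1}\|=\|(h_{m}^{-1})'(0)\|\le C$ with $C=C(c',\operatorname{diam}\Omega)$. The maps $h_{m}$ are uniformly bounded and defined on every compact subset of $\Omega$ for $m$ large, so after passing to a subsequence $h_{m}\to h$ locally uniformly on $\Omega$ and, simultaneously, $h_{m}^{-1}|_{B^{n}(0,c')}\to\psi$ locally uniformly on $B^{n}(0,c')$.

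Passing to the limit in the derivative bound, both $h'(p)$ and $\psi'(0)=(h'(p))^{-1}$ are invertible. Hence $h$ is a non-degenerate locally uniform limit of the injective holomorphic maps $h_{m}$, so it is injective on $\Omega$ (recall that a locally uniform limit of injective holomorphic maps is either injective or has identically vanishing Jacobian); thus $h\colon\Omega\to B^{n}$ is a holomorphic embedding with $h(p)=0$. Likewise $\psi$, being a non-degenerate locally uniform limit of the injective maps $h_{m}^{-1}|_{B^{n}(0,c')}$ with $\psi(0)=p$, is injective on $B^{n}(0,c')$, so $\psi(B^{n}(0,c'))$ is open in $\mathbb{C}^{n}$; being contained in $\overline{\Omega}$, it lies in $\Omega$. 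Since $\psi$ maps into the interior $\Omega$, where $h_{m}\to h$ locally uniformly, letting $m\to\infty$ in the identity $h_{m}\circ h_{m}^{-1}=\operatorname{id}$ on $B^{n}(0,c')$ yields $h\circ\psi=\operatorname{id}$; therefore $B^{n}(0,c')=h(\psi(B^{n}(0,c')))\subset h(\Omega)$, and $F:=h$ does the job.

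I expect the only real difficulty to be controlling the limits $h$ and $\psi$: the maps $h_{m}$ live only on $f_{m}(D)\subsetneq\Omega$, and the centres $z_{m}$ may escape to $\partial D$, so a priori $\psi$ could collapse to the constant $p$ or its image could reach $\partial\Omega$. What rules this out is that the Schwarz bound $\|(h_{m}'(p))^{-1}\|\le C$ is an estimate for a derivative at the \emph{fixed interior point} $p$ and hence survives the limit; it forces $\psi$ to be a non-degenerate, and therefore injective, limit of embeddings, which keeps $\psi(B^{n}(0,c'))$ inside $\Omega$ — so that, perhaps surprisingly, neither Kobayashi completeness nor any boundary regularity of $\Omega$ enters the argument.
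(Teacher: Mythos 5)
Your overall strategy is sound and is essentially the paper's: transport the squeezing maps of $D$ to $f_m(D)$ by biholomorphic invariance and pass to a normal-families limit. The paper compresses the limit step into a citation of the stability lemma for squeezing functions under exhaustions (Lemma~\ref{lem:stability squeezing}, from \cite{Deng-Guan-Zhang16}), so its proof is two lines: $s_{f_m(D)}(w)=s_D(f_m^{-1}(w))\ge c$ and $s_\Omega(w)=\lim_m s_{f_m(D)}(w)$. What you have written is in effect a self-contained proof of the one inequality of that lemma that is actually needed, which is a legitimate and more informative route --- except for one step.

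The step that fails as written is the deduction of $\psi(B^{n}(0,c'))\subset\Omega$ from the fact that $\psi(B^{n}(0,c'))$ is open and contained in $\overline{\Omega}$. For an arbitrary bounded domain $\Omega$ --- and the theorem assumes no boundary regularity --- an open subset of $\overline{\Omega}$ need not lie in $\Omega$: take $\Omega=B^{2}\setminus\{z_{1}=0\}$, whose closure is $\overline{B^{2}}$ and contains open sets meeting $\partial\Omega$. Since the whole purpose of this step is to rule out $\psi$ touching $\partial\Omega$ (as you note yourself in your closing paragraph), this is a genuine gap and not a formality. The correct tool is the degree-theoretic generalized Rouch\'e theorem (Lloyd \cite{Llo79}), exactly as in the paper's proof of Theorem~\ref{thm:exhauting domain}: since $J_{\psi}$ is nowhere zero on $B^{n}(0,c')$ (Hurwitz applied to the nowhere-vanishing $J_{h_{m}^{-1}}$, together with $J_{\psi}(0)\neq 0$), each $w_{0}$ is an isolated preimage of $q:=\psi(w_{0})$; hence for all large $m$ every point $q'$ in a fixed neighborhood of $q$ is attained by $h_{m}^{-1}$ near $w_{0}$, so that whole neighborhood lies in $h_{m}^{-1}(B^{n}(0,c'))\subset\Omega$ and therefore $q\in\operatorname{int}\Omega=\Omega$. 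With this substitution the remainder of your argument (the identity $h\circ\psi=\mathrm{id}$ and the conclusion $B^{n}(0,c')\subset h(\Omega)$) goes through; the analogous containment $h(\Omega)\subset B^{n}$ is unproblematic because $B^{n}$ equals the interior of its closure.
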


Another purpose of the present note is to give an boundary estimate of $h_D$.
By comparing with squeezing functions, we consider a variant $e_D$ of $h_D$ which is defined by
$$h^{-1}_D(z)=\log\frac{1+e_D(z)}{1-e_D(z)}.$$
Then $e_D(z)\in (0,1]$ and is set to be $1$ if $h_D(z)=0$.

\begin{thm}\label{thm-intr:ball charac}
Let $D\subset\mc^n$ be a bounded domain and $p\in \partial D$ be a $C^2$ boundary point of $D$.
If there is a sequence $z_j\in D\ (j\geq 1)$ converging to $p$ and a sequence of positive numbers
$\epsilon_j\ (j\geq 1)$ converges to $0$ such that $e_D(z_j)>1-\epsilon_j \delta(z_j)$ for all $j$,
then $D$ is biholomorphic to the unit ball,
where $\delta(z)$ denotes the distance between $z$ and $\partial D$.
\end{thm}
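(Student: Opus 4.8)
The plan is to show that the hypothesis forces $D$ to be exhausted by the unit ball $B^n$ in the sense of the Introduction, and then to invoke the elementary fact recalled there: a bounded domain that can be exhausted by a homogeneous domain is biholomorphic to it. Since $B^n$ is homogeneous, this yields $D\cong B^n$.

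First I would unwind the hypothesis. As $t\mapsto\log\frac{1+t}{1-t}$ is increasing on $(0,1)$, the relation $h_D^{-1}(z)=\log\frac{1+e_D(z)}{1-e_D(z)}$ and the assumption $e_D(z_j)>1-\epsilon_j\delta(z_j)$ give $h_D^{-1}(z_j)>\log\frac{2-\epsilon_j\delta(z_j)}{\epsilon_j\delta(z_j)}$, hence, using $\epsilon_j\to 0$, that $h_D^{-1}(z_j)-\log\frac1{\delta(z_j)}\to+\infty$. Since $h_D(z_j)$ is an infimum of the numbers $1/q_{D,f}(z_j)$, for each $j$ there is an injective holomorphic map $f_j\colon B^n\to D$ with $f_j(0)=z_j$ and $B^K_D(z_j,r_j)\subset f_j(B^n)$ for some $r_j$ arbitrarily close to $h_D^{-1}(z_j)$ from below (taking $r_j$ as large as we wish for those $j$, if any, with $h_D^{-1}(z_j)=\infty$); in all cases $r_j-\log\frac1{\delta(z_j)}\to+\infty$.

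Next I would use the $C^2$ boundary point to control the Kobayashi distance to a fixed interior point. Fix $w_0\in D$. Near $p$ the boundary $\partial D$ satisfies an interior ball condition, so for $j$ large the nearest boundary point of $z_j$ is the tangency point of a ball $B_j\subset D$ of a fixed radius $r_0$ which contains $z_j$ at Euclidean distance $r_0-\delta(z_j)$ from its centre $c_j$; comparing Kobayashi metrics, $K_D(z_j,c_j)\le K_{B_j}(z_j,c_j)=\half\log\frac{2r_0-\delta(z_j)}{\delta(z_j)}$, and since the centres $c_j$ stay in a compact subset of $D$ we get $K_D(z_j,w_0)\le\log\frac1{\delta(z_j)}+C_1$ with $C_1$ independent of $j$. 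Consequently, for any compact $E\subset D$, $\sup_{w\in E}K_D(z_j,w)\le\log\frac1{\delta(z_j)}+C_1+\sup_{w\in E}K_D(w_0,w)<r_j$ once $j$ is large, because $r_j-\log\frac1{\delta(z_j)}\to+\infty$. Hence $E\subset B^K_D(z_j,r_j)\subset f_j(B^n)$ for large $j$. As $E$ is arbitrary, $D$ is exhausted by $B^n$, and the quoted rigidity for bounded domains exhausted by homogeneous domains gives $D\cong B^n$.

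The step I expect to require the most care is the boundary estimate $K_D(z_j,w_0)\lesssim\log\frac1{\delta(z_j)}$: this is the only place the $C^2$ regularity of $\partial D$ at $p$ enters (through the interior ball, equivalently interior cone, condition), and it is precisely the inequality that makes the lower bound on the growth of $h_D^{-1}$ assumed in the hypothesis outrun the size of $D$ in the Kobayashi metric, so that the images $f_j(B^n)\supset B^K_D(z_j,r_j)$ absorb every compact subset of $D$. After that, no further hyperbolicity or completeness of $D$ is needed, and the exhaustion rigidity finishes the proof.
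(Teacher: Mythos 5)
Your proof is correct and follows essentially the same route as the paper's: you extract the maps $f_j\colon B^n\to D$ from the hypothesis on $e_D(z_j)$, prove the upper bound $d_D(z_j,w_0)\le\log\frac{1}{\delta(z_j)}+C$ at the $C^2$ boundary point via an interior tangent ball (the paper's Lemma 4.1 does the same with an interior analytic disc), and conclude that every compact subset of $D$ is absorbed by $B^K_D(z_j,r_j)\subset f_j(B^n)$, so $D$ is exhausted by $B^n$. The only difference is the last step, where you invoke the elementary rigidity for exhaustion by homogeneous domains rather than the paper's Theorem 1.1; both are available and equally valid here.
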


By the decreasing property of the Kobayashi metric,
it is obvious that $s_D\leq e_D$ for all bounded domains (see \cite{Nik-Ver18}).
So as a corollary of Theorem \ref{thm-intr:exhauting domain},
we get the main result of Diederich-Forn{\ae}ss-Wold in \cite{Diedrich-Fornaess-Wold13}
which says that, for a bounded strictly pseudocovnex domain $D$ with $C^2$ boundary which is not biholomorphic to the unit ball,
$s_D(z)\leq C\delta(z)$ for some constant $C$.

On the other hand, it is proved by Forn{\ae}ss-Wold that
for a bounded strictly pseudoconvex domain $D$ with $C^4$ boundary,
the estimate
$$s_D(z)\geq 1-C\delta(z), z\in D$$
holds for some constant $C>0$ \cite{Fornaess-Wold15}.
Combing this result with Theorem \ref{thm-intr:ball charac},
we obtain the following

\begin{cor}
Let $D$ be a bounded strictly pseudoconvex domain with $C^4$ boundary.
Then there exists a constant $C>0$ such that
$$1-C^{-1}\delta(z)\leq e_D(z)\leq 1-C\delta(z), z\in D.$$
\end{cor}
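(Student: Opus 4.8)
The plan is to prove the two inequalities separately — the lower one directly from the Forn{\ae}ss--Wold estimate, the upper one from Theorem~\ref{thm-intr:ball charac} — and then to merge the two constants. The lower bound is immediate: since $\partial D$ is $C^4$, the Forn{\ae}ss--Wold estimate $s_D(z)\ge 1-C_1\delta(z)$ applies to $D$, and combining it with the inequality $s_D\le e_D$ noted in the paper gives $e_D(z)\ge 1-C_1\delta(z)$ on $D$; we may and do assume $C_1\ge 1$.

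For the upper bound, the statement is of course to be read for $D$ not biholomorphic to the unit ball (otherwise $e_D\equiv 1$ and the asserted inequality fails), exactly as in the Diederich--Forn{\ae}ss--Wold result quoted just above; so assume $D\not\cong B^n$. Suppose, for contradiction, that no constant works. Then there is a sequence $z_j\in D$ with $e_D(z_j)>1-\tfrac1j\,\delta(z_j)$, and since $\delta$ is bounded on $D$ this forces $e_D(z_j)\to 1$, i.e.\ $h_D(z_j)\to 0$. After passing to a subsequence, $z_j\to p\in\overline D$. If $p\in\partial D$, then $p$ is a $C^2$ boundary point of $D$, and the data $z_j\to p$ together with $\epsilon_j:=1/j\to 0$ verify the hypotheses of Theorem~\ref{thm-intr:ball charac}; hence $D$ would be biholomorphic to $B^n$, contrary to assumption. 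So $p\in D$.

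The interior case $p\in D$ is the step I expect to be the real obstacle: it amounts to showing that $e_D$ stays bounded away from $1$ on compact subsets of $D$. From $h_D(z_j)\to 0$ one can pick injective holomorphic $f_j:B^n\to D$ with $f_j(0)=z_j$ and $B^K_D(z_j,R_j)\subset f_j(B^n)$, $R_j\to\infty$; since $z_j\to p$ and the Kobayashi balls $B^K_D(p,R)$ exhaust $D$ as $R\to\infty$, each $f_j(B^n)$ eventually contains any prescribed compact subset of $D$. I would then run a normal-families argument: along a subsequence, $g_j:=f_j^{-1}\to g$ locally uniformly on $D$ with $g(D)\subset\overline{B^n}$, and $f_j\to f$ locally uniformly on $B^n$ with $f(B^n)\subset\overline D$. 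Each $g_j$ is a biholomorphism onto $B^n$, hence an isometry for the Kobayashi infinitesimal metric, so the Euclidean length $|dg_j(z_j)v|$ equals the Kobayashi norm of $v$ at $z_j$ in $f_j(B^n)$, which is squeezed between its values at $z_j$ in $D$ (a domain containing $f_j(B^n)$) and in a fixed small ball $B^K_D(p,1)$ (a domain contained in $f_j(B^n)$ for $j$ large); both bounds are uniform, so the singular values of $dg_j(z_j)$ remain in a fixed interval $[a,b]$ with $0<a\le b<\infty$. Consequently $dg(p)$ is invertible, $g$ is nonconstant and — by the open mapping theorem and the equidimensional Hurwitz phenomenon — an injective holomorphic map $D\to B^n$, and similarly $f$ maps $B^n$ into $D$ (using the maximum principle for a strictly plurisubharmonic defining function of $D$). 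Passing to the limit in $g_j\circ f_j=\mathrm{id}_{B^n}$ and in $f_j\circ g_j=\mathrm{id}$ — the latter now valid on every compact subset of $D$ because $R_j\to\infty$ — yields $g\circ f=\mathrm{id}_{B^n}$ and $f\circ g=\mathrm{id}_D$, so $g$ is a biholomorphism $D\to B^n$, again a contradiction. Hence some $C_2>0$ satisfies $e_D\le 1-C_2\delta$ on $D$.

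Finally, with $C:=\min(C_2,1/C_1)$ one reads off, for every $z\in D$,
\[
1-C^{-1}\delta(z)\ \le\ 1-C_1\delta(z)\ \le\ e_D(z)\ \le\ 1-C_2\delta(z)\ \le\ 1-C\delta(z),
\]
which is the assertion. Beyond Theorem~\ref{thm-intr:ball charac} and the Forn{\ae}ss--Wold bound, the only genuinely non-formal input is the interior normal-families argument, and within it the delicate point is the nondegeneracy of the limit maps — ruling out that the $f_j$ collapse or spread out — which is precisely what the Kobayashi-metric squeeze provides.
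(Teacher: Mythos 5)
Your proof is correct and follows the derivation the paper intends: the lower bound is the Forn{\ae}ss--Wold estimate combined with $s_D\leq e_D$, and the upper bound is the contrapositive of Theorem \ref{thm-intr:ball charac}. Two remarks. First, your observation that the upper inequality must be read for $D$ not biholomorphic to $B^n$ is correct and necessary: for $D=B^n$ one has $e_D\equiv 1$, so the corollary as printed needs the same caveat as the Diederich--Forn{\ae}ss--Wold statement it parallels. Second, the interior-accumulation case --- which you rightly identify as the one step not covered by Theorem \ref{thm-intr:ball charac}, and which the paper passes over in silence --- does not require your normal-families argument from scratch. Once $h_D(z_j)\to 0$ with $z_j\to p\in D$, the maps $f_j:B^n\to D$ with $B^K_D(z_j,R_j)\subset f_j(B^n)$ and $R_j\to\infty$ already exhibit $D$ as exhausted by the unit ball (any compact $K\subset D$ lies in $B^K_D(z_j,R_j)$ for large $j$ because $d_D(z_j,\cdot)$ is uniformly bounded on $K$ for $z_j$ near $p$), and Theorem \ref{thm-intr:exhauting domain} applied with the exhausting domain taken to be $B^n$ yields $D\cong B^n$ at once, since both alternatives of that theorem then collapse to the ball. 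Your hands-on version of this step (the Kobayashi-metric squeeze on $dg_j(z_j)$ to get nondegeneracy, Hurwitz for injectivity, the plurisubharmonic defining function to keep $f(B^n)$ inside $D$) is essentially a re-proof of the first case of Theorem \ref{thm-intr:exhauting domain}; it is sound, but it is work the paper has already paid for.
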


It seems that for a strictly pseudoconvex domain $D$,
the boundary behaviour of $e_D(z)$ and $s_D(z)$ are very similar.
So we conjecture that the following comparison
$$\lim_{z\ra \partial D}\frac{1-e_D(z)}{1-s_D(z)}=1$$
holds.\\\\

During the preparation of this note,
a paper of Nikolov and Verma \cite{Nik-Ver18} discussing some related topics appears in arXiv.
After the first version of this note appeared in arXiv, 
Professor Nikolai Nikolov pointed out to the authors that Theorem 1.3 in this note 
is contained in Proposition 8 in \cite{Nik-Ver18}.
Proposition 8 in \cite{Nik-Ver18} is stated for squeezing functions and itself can not contain Theorem \ref{thm-intr:ball charac}.
But after looking into the details of its proof, 
we realized that the same argument can give a proof of Theorem \ref{thm-intr:ball charac}.

\subsection*{Acknowledgements}
The first author thanks Professor John Erik Forn{\ae}ss for discussions.
The authors are partially supported by NSFC grants.

\section{Exhausting a domain by strictly pseudoconvex domains}
The aim of this section if to prove Theorem \ref{thm-intr:exhauting domain}.

We first recall some results about squeezing functions from \cite{Deng-Guan-Zhang12}\cite{Deng-Guan-Zhang16}.

\begin{lem}[\cite{Deng-Guan-Zhang12}]\label{lem:cha of ball by squeezing}
Let $D\subset\mc^n$ be a bounded domain.
If there is a point $z\in D$ such that $s_D(z)=1$,
then $D$ is biholomorphic to the unit ball $\mathbb B^n$.
\end{lem}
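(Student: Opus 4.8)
The plan is to realize the equality $s_D(z)=1$ by a limiting map $f\colon D\to B^n$ and to show that this $f$ is a biholomorphism onto $B^n$.

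First I would extract a limit. Since $s_D(z)=1$, choose injective holomorphic maps $f_j\colon D\to B^n$ with $f_j(z)=0$ and numbers $r_j\nearrow 1$ such that $B(0,r_j)\subset f_j(D)\subset B^n$. Because $D$ is bounded, the family $\{f_j\}$ is normal, and for each $\rho<1$ the maps $f_j^{-1}$ are eventually defined on $B(0,\rho)$ with values in the bounded set $D$, so $\{f_j^{-1}|_{B(0,\rho)}\}$ is normal as well. After passing to a subsequence and a diagonal argument over $\rho\nearrow 1$, I get $f_j\to f$ locally uniformly on $D$ and $f_j^{-1}\to g$ locally uniformly on $B^n$, with $f\colon D\to\overline{B^n}$ and $g\colon B^n\to\overline D$ holomorphic, $f(z)=0$ and $g(0)=z$.

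Next I would record the structural properties. Cauchy's estimates applied to $f_j^{-1}$ (holomorphic on $B(0,1/2)$ for $j$ large with values in $D$) bound $\|df_j(z)^{-1}\|$, hence $\det df(z)\neq0$, so $f$ is nonconstant; the maximum principle applied to $w\mapsto\langle f(w),f(w_0)\rangle$ then excludes interior points $w_0$ with $|f(w_0)|=1$, so $f(D)\subset B^n$, and Hurwitz's theorem gives that $f$ is injective. Symmetrically $dg(0)$ is invertible and $g$ is injective. Passing to the limit in $f_j\circ f_j^{-1}=\mathrm{id}$ and $f_j^{-1}\circ f_j=\mathrm{id}$ (using $f(D)\subset B^n$ to keep all arguments in the interiors) yields $g\circ f=\mathrm{id}_D$ and $f\circ g=\mathrm{id}$ on $g^{-1}(D)$; in particular $g^{-1}(D)=f(D)$ and $g|_{f(D)}=f^{-1}$.

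It then remains to prove $f(D)=B^n$, equivalently $g(B^n)\subset D$, and this is the step I expect to be the main obstacle: a priori $g$ could carry some point of $B^n$ into $\partial D$, and for an arbitrary bounded $D$ this cannot be ruled out by soft normal-family arguments (it does occur for the limit maps attached to, say, $B^n$ minus an interior point, which is precisely why such a domain must have squeezing value $<1$ for a more delicate reason). The key input is the distance-decreasing property of the Kobayashi distance $k_D$: applying it to $f_j\colon D\to B^n$ and to $f_j^{-1}\colon B(0,r_j)\to D$ gives, for $w\in B^n$,
\[
\tanh^{-1}|w|\ \le\ k_D\big(z,\,f_j^{-1}(w)\big)\ \le\ \tanh^{-1}\!\big(|w|/r_j\big),
\]
so the preimages $f_j^{-1}(w)$ lie in the Kobayashi ball $B^K_D\big(z,\tanh^{-1}(|w|/r_j)\big)$ and hence remain inside one fixed Kobayashi ball of $D$ about $z$ as $j\to\infty$. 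This confinement, together with the identity $f\circ g=\mathrm{id}$ on $g^{-1}(D)$ and a connectedness argument showing $g^{-1}(D)$ is also closed in $B^n$ (supplemented, for domains that are not complete hyperbolic, by a volume comparison exploiting $\mathrm{Vol}\,f_j(D)\to\mathrm{Vol}\,B^n$), should force $g^{-1}(D)=B^n$. Once $g(B^n)\subset D$, the identities become $f\circ g=\mathrm{id}_{B^n}$ and $g\circ f=\mathrm{id}_D$, so $f$ is a biholomorphism from $D$ onto $B^n$, which is the assertion of the lemma.
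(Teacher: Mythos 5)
Your first two paragraphs are sound and follow the standard route (this lemma is only cited in the paper, from Deng--Guan--Zhang; the argument there is of the same normal-families type). The difficulty is exactly where you place it, but the three tools you offer to close it do not, singly or together, do the job. (i) The Kobayashi confinement $k_D(z,f_j^{-1}(w))\le\tanh^{-1}(|w|/r_j)$ keeps the preimages in a fixed Kobayashi ball, but since $D$ is an arbitrary bounded domain it need not be complete hyperbolic, so a Kobayashi ball need not be relatively compact in $D$ and the limit $g(w)$ may still land on $\partial D$. (ii) The ``connectedness argument showing $g^{-1}(D)$ is closed in $B^n$'' is not an argument but a restatement of the claim: closedness means precisely that $g(w)\in D$ whenever $w$ is a limit of points of $g^{-1}(D)$, which is what must be proved. (iii) The volume comparison gives at best $\mathrm{Vol}\,(g(B^n))\le\liminf_j\mathrm{Vol}\,(g_j(B(0,\rho)))\le\mathrm{Vol}\,(D)$, hence $\mathrm{Vol}\,(g(B^n)\setminus D)=0$; a relatively closed null set $g(B^n)\cap\partial D$ can still be nonempty (as for the punctured ball), so this does not force $g(B^n)\subset D$. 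The hedge ``should force'' is therefore covering a genuine hole.

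The missing idea is to stop trying to show $g(w)\in D$ pointwise and instead produce, for each $w\in B^n$, an honest preimage of $w$ under $f$ inside $D$. Set $\tau_j:=f\circ f_j^{-1}$ on $B(0,r_j)$; this is a well-defined injective holomorphic map into $f(D)\subset B^n$ with $\tau_j(0)=0$. By normality and your identity $f\circ g=\mathrm{id}$ on the nonempty open set $f(D)$, every locally uniform limit of $\{\tau_j\}$ equals the identity on $f(D)$, hence on $B^n$ by the identity theorem; so $\tau_j\to\mathrm{id}_{B^n}$ locally uniformly. Now fix $\rho<\rho'<1$ and $w_0\in B(0,\rho)$. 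For large $j$ one has $\sup_{|u|=\rho'}\|\tau_j(u)-u\|<\rho'-\rho\le\inf_{|u|=\rho'}\|u-w_0\|$, so by the generalized Rouch\'e theorem for holomorphic maps (Lloyd \cite{Llo79}, the same tool the paper uses in the proof of Theorem \ref{thm:exhauting domain}) the equation $\tau_j(u)=w_0$ has a solution $u_j\in B(0,\rho')$, i.e.\ $w_0=f\bigl(f_j^{-1}(u_j)\bigr)\in f(D)$. Hence $B(0,\rho)\subset f(D)$ for every $\rho<1$, so $f(D)=B^n$ and $f$ is the desired biholomorphism. With this replacement for your third paragraph the proof is complete.
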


\begin{lem}[\cite{Deng-Guan-Zhang16}]\label{lem:stability squeezing}
Let $D, D_k\subset\mc^n$ ($k\geq 1$) are bounded domains with $D_k\subset D$
such that, for any compact set $K\subset D$, there is $N>0$ such that $K\subset D_k$ for all $k>N$.
Then for any $z\in D$, we have $\lim_{k\ra\infty}s_{D_k}(z)=s_D(z)$.
\end{lem}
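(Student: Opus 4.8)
The plan is to fix $z\in D$, note that $z\in D_k$ for all large $k$ (since $\{z\}$ is compact), and prove the two inequalities $\liminf_k s_{D_k}(z)\ge s_D(z)$ and $\limsup_k s_{D_k}(z)\le s_D(z)$ separately. The first is easy: fix $\epsilon>0$ and pick an injective holomorphic $f\colon D\to B^n$ with $f(z)=0$ and $B(0,s_D(z)-\epsilon)\subset f(D)$. Since an injective holomorphic map on a domain in $\mc^n$ is a biholomorphism onto its (open) image, for each $r<s_D(z)-\epsilon$ the set $f^{-1}(\overline{B(0,r)})$ is a compact subset of $D$, hence contained in $D_k$ for $k$ large; thus $\overline{B(0,r)}\subset f(D_k)$ and $s_{D_k}(z)\ge s_{D_k}(z,f|_{D_k})\ge r$. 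Letting $r\uparrow s_D(z)-\epsilon$ and then $\epsilon\downarrow 0$ gives the first inequality.

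For the second inequality, pass to a subsequence along which $s_{D_k}(z)\to R:=\limsup_k s_{D_k}(z)$, and choose injective holomorphic $f_k\colon D_k\to B^n$ with $f_k(z)=0$ and $B(0,r_k)\subset f_k(D_k)$, where $r_k\ge s_{D_k}(z)-1/k$, so $r_k\to R$. As $\|f_k\|<1$ and every compact subset of $D$ eventually lies in $D_k$, a Montel/diagonal argument produces a further subsequence with $f_k\to f$ locally uniformly on $D$, where $f\colon D\to\overline{B^n}$ is holomorphic and $f(z)=0$. I would first argue that $f$ is a biholomorphism of $D$ onto an open subset $\Omega'\subset B^n$. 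Using Cauchy estimates and boundedness of $D$, the inverse maps $g_k:=(f_k|_{D_k})^{-1}$, defined on $B(0,r_k)$, are uniformly Lipschitz on a fixed small ball $B(0,\rho_0)$ and send it into a fixed compact subset of $D$; a convergent subsequence there gives a holomorphic right inverse of $f$ fixing $z$, so $\det Df(z)\ne 0$. Then $\det Df\not\equiv 0$, and since each $\det Df_k$ is zero-free, Hurwitz forces $\det Df$ to be zero-free, so $f$ is everywhere locally biholomorphic; a degree (argument-principle) argument shows $f$ is injective, since a double value of $f$ would force a double value of the injective $f_k$ for large $k$. Finally the maximum principle for the plurisubharmonic function $\|f\|^2$, which cannot attain the value $1$ in $D$ without $f$ being constant, gives $f(D)\subset B^n$.

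It remains to show $B(0,R)\subset\Omega'$, for then $s_D(z)\ge s_D(z,f)\ge R$ and the proof is complete. Fix $w$ with $|w|<R$; for $k$ large $|w|<r_k$, so $a_k:=g_k(w)\in D_k$ is defined and $f_k(a_k)=w$. The decreasing property of the Kobayashi metric gives
\[
k_D(z,a_k)\ \le\ k_{D_k}(z,a_k)\ =\ k_{f_k(D_k)}(0,w)\ \le\ k_{B(0,r_k)}(0,w)\ \longrightarrow\ \tfrac12\log\frac{1+|w|/R}{1-|w|/R},
\]
so the points $a_k$ remain in a fixed Kobayashi ball of $D$ about $z$. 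Extracting a subsequence $a_k\to a\in D$ (an interior point), one gets $w=\lim_k f_k(a_k)=f(a)\in\Omega'$, as desired.

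The step I expect to be the main obstacle is exactly this last one: ruling out that the near-extremal preimages $a_k=f_k^{-1}(w)$ escape to $\partial D$. The displayed Kobayashi estimate is the essential tool, and the non-escape has to be extracted from it together with the completeness/properness behaviour of the Kobayashi distance of $D$ near the relevant boundary points; making this rigorous is the delicate point of the argument. The remaining technical ingredient — obtaining a non-degenerate, globally injective limit map $f$, i.e. the several-variables substitute for Hurwitz's theorem — is standard but must be handled with care, via the observation that $\det Df$ is a scalar Hurwitz limit and a local degree argument for injectivity.
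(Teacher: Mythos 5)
Your overall architecture (easy direction via restriction of a near-extremal map for $D$; hard direction via Montel limits of near-extremal maps $f_k$ for $D_k$ and of their inverses $g_k$, plus Hurwitz on the Jacobian determinant and a degree argument for injectivity) is exactly the standard one, and everything up to the last step is sound. But the last step, which you yourself flag as the main obstacle, is a genuine gap, and the tool you reach for there cannot close it. The bound $d_D(z,a_k)\le C$ does \emph{not} imply that the points $a_k=g_k(w)$ stay in a compact subset of $D$, because for a general bounded domain the Kobayashi balls $B^K_D(z,C)$ need not be relatively compact in $D$. For instance, if $D=B^2\setminus\{0\}$, the Kobayashi metric of $D$ agrees with that of $B^2$ (codimension-two analytic sets are negligible for it), so a sequence $a_k\to 0\in\partial D$ stays at bounded Kobayashi distance from $z$. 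Since the lemma is asserted for arbitrary bounded domains, no completeness or properness of $d_D$ is available, and the Kobayashi estimate is simply the wrong instrument here; in fact it is not needed at all.

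The correct way to finish is to apply to the inverse maps the same Hurwitz-plus-degree machinery you already deploy for $f$. After passing to a subsequence, $g_k\to g$ locally uniformly on $B(0,R)$ with $g(0)=z$ and, as you showed, $\det Dg(0)\neq 0$; since each $\det Dg_k$ is zero-free, Hurwitz gives that $\det Dg$ is zero-free on all of $B(0,R)$, so $g$ is everywhere locally biholomorphic. Now suppose $g(w_0)=q\in\partial D$ for some $w_0\in B(0,R)$. Choose a small ball $W$ around $w_0$ with $\overline W\subset B(0,R)$ and $g^{-1}(q)\cap\overline W=\{w_0\}$. The multiplicity/Rouch\'e argument for holomorphic maps (the same generalized Rouch\'e theorem the paper invokes from Lloyd's article) shows that for every $q'$ in a fixed small neighborhood $V$ of $q$ and every sufficiently large $k$, the equation $g_k(w)=q'$ has a solution in $W$; hence $V\subset g_k(W)\subset D_k\subset D$, contradicting $q\in\partial D$. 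Therefore $g(B(0,R))\subset D$, the identity $f\circ g=\mathrm{id}$ propagates from a neighborhood of $0$ to all of $B(0,R)$ by the identity theorem, and $B(0,R)\subset f(D)$ gives $s_D(z)\ge R$ as required. (For the record, the paper does not reprove this lemma; it imports it from the cited Deng--Guan--Zhang paper, whose argument is the one just described, so your proposal differs from the source precisely, and only, at this point.)
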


In \cite{Deng-Guan-Zhang16}, the domains $D_k$ are required to be increasing,
but the same proof can give the result in the above lemma.

\begin{lem}[\cite{Deng-Guan-Zhang16}]\label{lem:squeezing tend 1}
Let $D\subset\mc^m$ be a bounded domain such that the closure $\overline D$ of $D$
admits a Stein open neighborhood basis. Then for any $C^2$ strictly pseudoconvex boundary point
$p$ of $D$, we have $\lim_{z\ra p}s_D(z)=1$.
\end{lem}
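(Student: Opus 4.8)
The plan is to reduce the lemma to a uniform statement: for every $\varepsilon>0$ there is a neighbourhood $V$ of $p$ such that every $z\in V\cap D$ admits an injective holomorphic map $F\colon D\ra B^n$ with $F(z)=0$ and $B(0,1-\varepsilon)\subset F(D)$. Since $s_D\le 1$ everywhere by definition, this gives $\lim_{z\ra p}s_D(z)=1$. Concretely I would fix a sequence $z_j\ra p$ and construct such maps $F_j$ along it, assembling each $F_j$ from a local normal form at $p$, a global extension provided by the Stein neighbourhood basis, and a final normalisation by an automorphism of $B^n$.

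\emph{Step 1 (local model at $p$).} Since $p$ is a $C^2$ strictly pseudoconvex boundary point, I would first pass, by a polynomial holomorphic change of coordinates near $p$ (absorbing the holomorphic part of the $2$-jet of a local defining function and diagonalising the Levi form) followed by a Cayley-type biholomorphism carrying the Siegel domain $\{\mathrm{Re}\,w_1+|w'|^2<0\}$ onto $B^n$, to a biholomorphism $\Phi$ of a neighbourhood $U$ of $p$ with $\Phi(p)\in\partial B^n$ such that $\Phi(D\cap U)\subset B^n$ and, near $\Phi(p)$, $\Phi(D\cap U)$ fills $B^n$ up to a second-order error: its complement in $B^n\cap\Phi(U)$ lies between $\partial B^n$ and a hypersurface osculating $\partial B^n$ to order two at $\Phi(p)$. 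For $z$ near $p$ the point $\Phi(z)$ lies near $\partial B^n$; composing with the automorphism $\tau_z$ of $B^n$ taking $\Phi(z)$ to the origin, I would check that $\tau_z\circ\Phi(D\cap U)\supset B(0,1-\varepsilon)$ once $z$ is close enough to $p$ --- the blow-up of $\tau_z$ near $\Phi(p)$ being beaten by the second-order smallness of the perturbation. This is the quantitative core of the argument.

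\emph{Step 2 (globalisation).} It remains to extend $\Phi$ to a \emph{global} injective holomorphic map $G$, defined on a neighbourhood of $\overline D$, with $G(\overline D)\subset B^n$, with $G$ agreeing with $\Phi$ to arbitrarily high order at $p$, and with $G(\overline D\setminus U)$ contained in a compact subset of $B^n$ at positive distance from $G(p)$. Here the hypothesis on $\overline D$ enters. On a Stein neighbourhood $\Omega$ of $\overline D$ I would first manufacture, from the local holomorphic support function at $p$, a holomorphic peak function $h$ on $\Omega$ with $h(p)=1$ and $|h|<1$ on $\overline D\setminus\{p\}$ (a Mittag-Leffler/Oka--Weil construction on $\Omega$), and then glue $\Phi$ to a fixed bounded holomorphic embedding of $\Omega$ into $B^n$, using high powers of $1-h$ as a holomorphic cut-off so that the correction to $\Phi$ vanishes to high order at $p$ and is uniformly small away from $p$. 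Then $F_j:=\tau_{z_j}\circ G$ is an injective holomorphic map $D\ra B^n$ with $F_j(z_j)=0$, and by Step 1 its image contains $B(0,1-\varepsilon)$ as soon as $z_j$ is close enough to $p$ (the part coming from $D\cap U$ already suffices, while $G$ keeps the rest of $D$ well inside $B^n$, so that it neither protrudes from the ball nor blocks the inscribed ball). A standard normal-families argument makes the estimate uniform in $z$, completing the proof.

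I expect Step 2 to be the genuine obstacle: Step 1 is classical local function theory and the final step is a soft compactness argument, but producing a single global embedding $G$ that is injective on all of $D$, osculates the round local model at $p$, and pushes the part of $D$ away from $p$ strictly into $B^n$ is exactly what forces the use of the Stein neighbourhood basis, through the holomorphic peak function at $p$ and Oka--Weil approximation on a Stein neighbourhood of $\overline D$.
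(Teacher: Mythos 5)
Your overall skeleton (localize at $p$, globalize to an injective map of a neighborhood of $\overline D$ into $B^n$ touching $\partial B^n$ only at the image of $p$, then compose with ball automorphisms) is the right one and matches the strategy behind the cited result. Step 1 is indeed classical. But Step 2, as you describe it, has a genuine gap, and it is exactly where the real content lies. First, a concrete error: if $h$ is a peak function with $h(p)=1$ and $|h|<1$ on $\overline D\setminus\{p\}$, then $|1-h|$ can be as large as $2$ away from $p$, so high powers $(1-h)^N$ blow up there; they are not "uniformly small away from $p$". (The standard localizing factor concentrated at $p$ is $h^N$, while $(1-h)$ only gives vanishing at $p$ to first order with no smallness elsewhere.) Second, and more seriously, no scalar holomorphic cut-off can do what you need: an expression of the form $\chi\Phi+(1-\chi)E$ is not a gluing of embeddings. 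Holomorphic interpolation between two injective maps has no reason to be injective, neither in the transition region nor globally (a point near $p$ may collide with a point far from $p$), and it also gives no control on the image staying inside $\overline{B^n}$ in the region where $\Phi(\partial D)$ is only quadratically separated from $\partial B^n$. Producing a single globally injective holomorphic map on a neighborhood of $\overline D$ that osculates the round model at $p$ and maps $\overline D\setminus\{p\}$ strictly into $B^n$ is precisely the "exposing points" theorem of Diederich--Forn{\ae}ss--Wold, and its proof does not go through peak functions and Oka--Weil approximation of functions; it requires Anders\'en--Lempert theory, i.e.\ approximation of an isotopy of embeddings (pushing the boundary out along an embedded analytic disc through $p$) by holomorphic automorphisms of $\mc^n$. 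That is a substantially stronger tool than anything in your outline, and without it Step 2 does not close.

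For comparison, the proof behind the quoted lemma runs as follows: the Stein neighborhood basis of $\overline D$ is used first to enlarge $D$ to a bounded strictly pseudoconvex domain $\Omega\supset D$ with $C^2$ boundary whose boundary coincides with $\partial D$ near $p$ (this, rather than the construction of a peak function, is where the hypothesis enters); then the Diederich--Forn{\ae}ss--Wold exposing theorem is applied to $\Omega$ to realize $p$ as a point of $\partial B^n$ with the rest of $\overline\Omega$ inside $B^n$; finally one restricts to $D$ and invokes the two-internally-tangent-balls estimate for squeezing functions from \cite{Deng-Guan-Zhang12}, which is the rigorous form of your Step 1. So your proposal is not an alternative route but an under-equipped version of the known one: the missing ingredient is the automorphism-approximation argument that guarantees global injectivity.
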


We can now give the proof of Theorem \ref{thm-intr:exhauting domain}.
For convenience, we restate it here.

\begin{thm}\label{thm:exhauting domain}
Let $D$ be a bounded strictly pseudoconvex domain in $\mc^n$ with $C^2$ boundary.
If a bounded domain $\Omega\subset\mc^n$ can be exhausted by $D$,
then $\Omega$ must be biholomorphic to $D$ or the unit ball $B^n$.
\end{thm}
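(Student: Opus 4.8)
The plan is to follow Fridman's original strategy but substitute the squeezing-function estimates (Lemmas \ref{lem:cha of ball by squeezing}--\ref{lem:squeezing tend 1}) for the estimates on $h_D$. First I would use the exhaustion hypothesis to produce a concrete sequence of maps: fix an exhaustion of $\Omega$ by relatively compact subdomains $\Omega_k \Subset \Omega$, and for each $k$ choose an injective holomorphic $f_k : D \to \Omega$ with $\Omega_k \subset f_k(D)$. The images $D_k := f_k(D)$ are then bounded domains satisfying the hypotheses of Lemma \ref{lem:stability squeezing} relative to $\Omega$ (after passing to a subsequence so they nest appropriately over compacts), so $s_{D_k}(z) \to s_\Omega(z)$ for every fixed $z \in \Omega$. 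Since $s_{D_k} = s_D \circ f_k^{-1}$ by biholomorphic invariance of the squeezing function, this says $s_\Omega(z) = \lim_k s_D(f_k^{-1}(z))$.

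Next comes the dichotomy, which is where the real content lies. Fix a base point $z_0 \in \Omega$ and set $w_k := f_k^{-1}(z_0) \in D$. Passing to a subsequence, either (a) $w_k$ stays in a compact subset of $D$, or (b) $w_k \to \partial D$. In case (b), since $D$ is a bounded strictly pseudoconvex domain with $C^2$ boundary, its closure admits a Stein neighborhood basis, so Lemma \ref{lem:squeezing tend 1} gives $s_D(w_k) \to 1$; hence $s_\Omega(z_0) = 1$, and Lemma \ref{lem:cha of ball by squeezing} forces $\Omega$ to be biholomorphic to $B^n$. In case (a), the $w_k$ subconverge to some interior point $w_\infty \in D$; the $f_k$ then form a normal family (they map into the bounded set $\Omega$), so a subsequence converges locally uniformly to a holomorphic $f : D \to \overline\Omega$ with $f(w_\infty) = z_0 \in \Omega$. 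One argues $f$ is actually a biholomorphism $D \to \Omega$: injectivity and openness follow from a Hurwitz-type argument combined with the fact that the inverses $f_k^{-1}$, restricted to a fixed $\Omega_m$, are eventually defined and also form a normal family, producing a holomorphic right/left inverse to $f$; surjectivity follows because every point of $\Omega$ eventually lies in $f_k(D)$.

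The main obstacle I expect is case (a): making the passage to the limit rigorous and showing the limit map $f$ is a biholomorphism rather than a degenerate map into $\partial\Omega$ or a non-surjective embedding. The key points to nail down are (i) that $f$ does not have image touching $\partial\Omega$ nor have everywhere-degenerate Jacobian --- here the anchoring $f(w_\infty)=z_0\in\Omega$ and the normal-family convergence of the inverses on each $\Omega_m$ are essential, since a locally uniform limit of the biholomorphisms $f_k^{-1}|_{\Omega_m}$ gives a holomorphic left inverse for $f$ on a neighborhood of $w_\infty$, which via the identity theorem extends to show $f$ is locally biholomorphic and globally injective; and (ii) surjectivity: given $p\in\Omega$, pick $k$ with $p\in\Omega_k\subset f_k(D)$ and track $f_k^{-1}(p)$, showing it stays in a compact part of $D$ so that $p$ is in the image of the limit. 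A secondary technical point is verifying that the subsequence chosen to make Lemma \ref{lem:stability squeezing} applicable (nesting of the $D_k$ over compact subsets of $\Omega$) is compatible with the subsequences chosen in the dichotomy --- but this is just a matter of performing all extractions simultaneously via a diagonal argument. Once $f$ is shown biholomorphic in case (a), $\Omega \cong D$ and the theorem follows.
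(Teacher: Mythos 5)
Your proposal is correct and follows essentially the same route as the paper: the same dichotomy on the preimages $f_k^{-1}(z_0)$, with the boundary case handled by the stability and boundary lemmas for the squeezing function plus the ball characterization, and the interior case handled by normal families of the $f_k$ and their inverses (the paper invokes the generalized Rouch\'e theorem of Lloyd where you invoke a Hurwitz-type argument, but these serve the same purpose). No substantive differences.
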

\begin{proof}
Let $f_k:D\ra \Omega$ ($k\geq 1$) be a sequence of holomorphic injective maps
satisfying, for any compact set $K\subset \Omega$, there is $N>0$ such that
$K\subset f_k(D)$ for all $k>N$.

Fix a point $w_0\in \Omega$, we consider the inverse images $\{z_k:=f^{-1}_k(w_0);k\geq 1\}$.
We first assume that the set has an accumulation point $z_0\in D$.
Without loss of generality, we may assume $\lim_{k\ra\infty} z_k=z_0$.
Let $g_k:=f^{-1}_k:f_k(D)\ra D$. By Montel theorem,
we may assume $f_k$ converges to $f:D\ra\mc^n$ and $g_k$ converges to $g:\Omega\ra\mc^n$
uniformly on compact sets.
We want to show that $f(D)\subset \Omega$ and $g(\Omega)\subset D$ and
$f\circ g=g\circ f=Id$.
By assumption, we have $f_k(z_k)\ra w_0=f(z_0)$ as $k\ra\infty$.
By Cauchy inequality, we see $J_g(w_0)\neq 0$ and $J_f(z_0)\neq 0$,
where $J_f$ and $J_g$ are the Jacobian of $f$ and $g$.
By Rouch\'e's theorem, $J_f\neq 0$ and $J_g\neq 0$ everywhere.
In particular, $f$ and $g$ are locally biholomorphic.
By the generalized Ruch\'e's theorem for holomorphic maps (see \cite{Llo79}),
we have $f(D)\subset \Omega$ and $g(\Omega)\subset D$.
Then the equalities
$f\circ g =Id_\Omega$ and $g\circ f=Id_D$ follows obviously.
Therefore, in this case, we get that $\Omega$ is biholomorphic to $D$.

We now assume that $\{z_k:=f^{-1}_k(w_0);k\geq 1\}$ has no accumulation point inside $D$.
Then we may assume that $\lim_{k\ra\infty}z_k=p$ for some $p\in\partial D$.
By biholomorphic invariance, we have $s_D(z_k)=s_{f_k(D)}(w_0)$ for all $k$.
By Lemma \ref{lem:stability squeezing}, we have $\lim_k s_{f_k(D)}(w_0)=s_\Omega(w_0)$.
By Lemma \ref{lem:squeezing tend 1}, we have $\lim_k s_D(z_k)=1$ and hence $s_{\Omega}(w_0)=1$.
By Lemma \ref{lem:cha of ball by squeezing}, $\Omega$ is biholomorphic to the unit ball.
\end{proof}

\section{Exhausting domains by homogenous regular domains}
In this section we give the proof of Theorem \ref{thm-intr:exhausted by USD},
which says that a domain is homogenous regular if it can be exhausted by a homogenous regular domain.

Recall that a bounded domain $D$ is called homogenous regular
if the squeezing function $s_D$ of $D$ has a positive lower bound.

We now give the proof of Theorem  Theorem \ref{thm-intr:exhausted by USD}.

\begin{proof}[Proof of Theorem  Theorem \ref{thm-intr:exhausted by USD}]
Let $D$ be a homogenous regular domain.
We assume that $s_D\geq c$ for some constant $c>0$.
Assume that $\Omega$ is a bounded domain that can be exhausted by $D$.
We want to show that $s_\Omega\geq c$ and hence $\Omega$ is also homogenous regular.

Let $f_k:D\ra\Omega$ be holomorphic injective maps such that for any compact set $K\subset\Omega$,
there is $N>0$ such that $K\subset f_k(D)$ for all $k\geq N$.

Let $w\in \Omega$ and set $z_k=f^{-1}(w)$.
By Lemma \ref{lem:stability squeezing}, $s_\Omega(w)=\lim_k s_{f_k(D)}(z_k)$.
By invariance of squeezing functions under biholomorphic transformations,
we have  $s_{f_k(D)}(z_k)=s_D(z_k)$ and hence $s_{f_k(D)}(z_k)\geq c$ for all $k$.
Hence $s_\Omega(w)\geq c$.
Since $w$ is arbitrary, $s_\Omega\geq c$ and hence $\Omega$ is also homogenous regular.
\end{proof}

\section{Boundary estimates of Fridman's invariants}
This section is to prove Theorem \ref{thm-intr:ball charac}.

We first give an estimate of upper bound of the Kobayashi distance
of a bounded domain near a $C^2$ boundary point.

\begin{lem}\label{lem:kobayashi estimate}
Assume $D \subset \mathbb{C}^{n}$ is a bounded domain with $z_0\in D$.
Let $p\in\partial D$ be a $C^2$ boundary point of $D$.
Let $d_D(\cdot,\cdot)$ be the Kobayashi distance on $D$.
Then there exists a constant $C$ such that
\begin{equation}
d_{D}(z_0,z) \leq \log\frac{1}{\delta(z)}+C
\end{equation}
for $p$ in some small neighborhood of $p$,
where $\delta(z)$ is the Euclidean distance from $z$ to $\partial D$.
\end{lem}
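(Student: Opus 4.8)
The plan is to combine the interior ball condition at a $C^2$ boundary point with the distance-decreasing property of the Kobayashi distance under inclusions.

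First I would extract from the regularity hypothesis a uniform interior ball condition near $p$: since $\partial D$ is $C^2$ in a neighborhood of $p$, there are a neighborhood $U$ of $p$ and a radius $r>0$ such that for every $q\in\partial D\cap U$ the ball $B(q+r\nu_q,r)$ is contained in $D$, where $\nu_q$ is the inner unit normal to $\partial D$ at $q$; one takes $r$ small enough that all these balls stay inside the region where $\partial D$ is $C^2$. Since $\nu_q$ depends continuously on $q$, the centers $q+r\nu_q$ converge to $p+r\nu_p\in D$ as $q\to p$, so after shrinking $U$ we may assume that $L:=\overline{\{q+r\nu_q:q\in\partial D\cap U\}}$ is a compact subset of $D$.

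Next, for $z\in D$ sufficiently close to $p$, pick a nearest boundary point $q=q(z)\in\partial D$; then $q\in\partial D\cap U$, $\delta(z)=|z-q|$, and $z-q$ is a positive multiple of $\nu_q$, so $z=q+\delta(z)\nu_q$ lies on a radius of the ball $B_z:=B(a_z,r)$ with $a_z:=q+r\nu_q$, and $|z-a_z|=r-\delta(z)$ once $\delta(z)\le r$. The inclusion $B_z\subset D$, the decreasing property of the Kobayashi distance, and the explicit formula for the Kobayashi distance of a Euclidean ball give
\begin{equation*}
d_D(a_z,z)\le d_{B_z}(a_z,z)=\frac{1}{2}\log\frac{r+|z-a_z|}{r-|z-a_z|}=\frac{1}{2}\log\frac{2r-\delta(z)}{\delta(z)}\le \frac{1}{2}\log\frac{1}{\delta(z)}+\frac{1}{2}\log(2r).
\end{equation*}
Since $a_z\in L$ and $d_D(z_0,\cdot)$ is finite and continuous on $D$, the constant $C_1:=\sup_{a\in L}d_D(z_0,a)$ is finite, so by the triangle inequality
\begin{equation*}
d_D(z_0,z)\le d_D(z_0,a_z)+d_D(a_z,z)\le C_1+\frac{1}{2}\log\frac{1}{\delta(z)}+\frac{1}{2}\log(2r)\le \log\frac{1}{\delta(z)}+C
\end{equation*}
for $z$ in a small enough neighborhood of $p$ (so that $\delta(z)\le 1$), with $C:=C_1+\frac{1}{2}\log(2r)$. (If the Kobayashi distance is normalized without the factor $\tfrac{1}{2}$, the same argument yields the bound directly, without needing $\delta(z)\le 1$.)

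The only place where the $C^2$ regularity of $\partial D$ at $p$ enters is the interior ball condition, and this is the step that requires the most care: one must shrink $r$ and $U$ so that the rolling balls — and hence their centers — stay inside $D$ near $p$, using that the principal curvatures of a $C^2$ graph are locally bounded. Everything after that is just monotonicity of the Kobayashi distance, the standard formula for the Kobayashi ball, and the triangle inequality.
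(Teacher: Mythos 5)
Your proof is correct and follows essentially the same route as the paper: both anchor $z$ to a compact set of points at a fixed distance $r$ from $\partial D$, estimate $d_D(a_z,z)$ by the Kobayashi distance in an inscribed model domain (you use the full interior tangent ball, the paper a one-dimensional disc in the normal direction of radius $r$ centered at the anchor point --- both yield the bound $\tfrac{1}{2}\log\frac{2r-\delta(z)}{\delta(z)}$ up to the normalization constant), and then conclude with the triangle inequality and compactness of the anchor set. Your explicit justification of the uniform interior ball condition at a $C^2$ boundary point is a welcome addition, since the paper merely asserts that the normal discs are contained in $D$.
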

\begin{proof}
By some basic results from differential topology,
there is an open neighborhood $U$ such that for any $z\in U\cap D$
there is a unique point $\pi(z)\in U\cap\partial D$ such that $\delta(z)=d(z,\pi(z))$.
Let $r>0$ be sufficiently small and let $M:=\{z\in \overline V\cap D; \delta(z)=r\}$,
where $V\subset U$ is an open neighborhood of $p$ such that $\overline V\subset U$.
Then for any $z\in M$, the 1-dimensional complex disc $\Delta_z$ in the direction $\pi(z)-z$ centered at $z$
and with radius $r$ is contained in $D$.

Now for any $z\in D$ which is sufficiently close to $p$,
we can find a unique $a\in M$ such that $z\in \Delta_{a}$.
By the triangle inequality and decreasing property for Kobayashi distance,
we have
$$d_D(z_0, z)\leq d_D(z_0, a)+d_{\Delta_a}(a,z)= d_D(z_0, a)+\log\frac{2r-\delta(z)}{\delta(z)}.$$
Note that $M$ is compact and $r$ is independent of $z$,
$D(z_0, z)\leq C+\log \frac{1}{\delta(z)}$ for some constant $C>0$.
\end{proof}

We now give the proof of Theorem \ref{thm-intr:ball charac}.
For convenience, we restate it here.

\begin{thm}\label{thm:ball charac}
Let $D\subset\mc^n$ be a bounded domain and $p\in \partial D$ be a $C^2$ boundary point of $D$.
If there is a sequence $z_j\in D\ (j\geq 1)$ converging to $p$ and a sequence of positive numbers
$\epsilon_j\ (j\geq 1)$ converges to $0$ such that $e_D(z_j)>1-\epsilon_j \delta(z_j)$ for all $j$,
then $D$ is biholomorphic to the unit ball,
where $\delta(z)$ denotes the distance between $z$ and $\partial D$.
\end{thm}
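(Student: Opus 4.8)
The plan is to reduce the statement to the characterization of the ball by the squeezing function (Lemma~\ref{lem:cha of ball by squeezing}): after fixing once and for all a base point $z_0\in D$, I will prove that $s_D(z_0)=1$.

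First I would unwind the definitions. Since $1/h_D(z)=\sup_f q_{D,f}(z)$ and, by definition of $e_D$, $1/h_D(z)=\log\frac{1+e_D(z)}{1-e_D(z)}$, the hypothesis $e_D(z_j)>1-\epsilon_j\delta(z_j)$ gives $1/h_D(z_j)>\log\frac{2-\epsilon_j\delta(z_j)}{\epsilon_j\delta(z_j)}$. Hence for each large $j$ there is an injective holomorphic $f_j\colon B^n\to D$ with $f_j(0)=z_j$ and $B^K_D(z_j,R_j)\subset f_j(B^n)$, where $R_j=\log\frac1{\delta(z_j)}+\log\frac1{\epsilon_j}-O(1)$, the $O(1)$ absorbing $\log(2-\epsilon_j\delta(z_j))$ and the loss from not attaining the suprema. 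Next I would move the center of this Kobayashi ball to $z_0$: by Lemma~\ref{lem:kobayashi estimate}, $d_D(z_0,z_j)\le\log\frac1{\delta(z_j)}+C$ for $j$ large, so by the triangle inequality $B^K_D(z_0,\rho_j)\subset B^K_D(z_j,R_j)\subset f_j(B^n)$ with $\rho_j:=R_j-d_D(z_0,z_j)$; the $\log\frac1{\delta(z_j)}$ terms cancel and $\rho_j\ge\log\frac1{\epsilon_j}-O(1)\to\infty$. This is precisely where the rate $\epsilon_j\delta(z_j)$ is used: with only $e_D(z_j)>1-C\delta(z_j)$ one would get $\rho_j=O(1)$.

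With arbitrarily large Kobayashi balls about $z_0$ contained in $f_j(B^n)$, I would feed these to the squeezing-function stability lemma. Set $a_j:=f_j^{-1}(z_0)\in B^n$ and choose $\phi_j\in\mathrm{Aut}(B^n)$ with $\phi_j(a_j)=0$. Because $f_j$ decreases the Kobayashi distance, $d_{B^n}(u,a_j)<r$ forces $d_D(f_j(u),z_0)<r$, so $f_j^{-1}\big(B^K_D(z_0,r)\big)\supset B^K_{B^n}(a_j,r)$ for every $r\le\rho_j$. Pick a non-decreasing $\tilde\rho_j\le\rho_j$ with $\tilde\rho_j\to\infty$ and let $D_j$ be the connected component of $z_0$ in $B^K_D(z_0,\tilde\rho_j)$; then $h_j:=\phi_j\circ f_j^{-1}\colon D_j\to B^n$ is injective holomorphic with $h_j(z_0)=0$ and $h_j(D_j)\supset\phi_j\big(B^K_{B^n}(a_j,\tilde\rho_j)\big)=B^K_{B^n}(0,\tilde\rho_j)=B\big(0,\tanh\tilde\rho_j\big)$, using that automorphisms of $B^n$ are $d_{B^n}$-isometries carrying Kobayashi balls to Kobayashi balls. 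Hence $s_{D_j}(z_0)\ge\tanh\tilde\rho_j$.

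Finally, since $D$ is connected its Kobayashi distance is finite-valued, so $\{D_j\}$ is an increasing exhaustion of $D$ by subdomains, and Lemma~\ref{lem:stability squeezing} yields $s_D(z_0)=\lim_j s_{D_j}(z_0)\ge\lim_j\tanh\tilde\rho_j=1$. As $s_D\le1$ always, $s_D(z_0)=1$, and Lemma~\ref{lem:cha of ball by squeezing} gives that $D$ is biholomorphic to $B^n$. I expect the crux to be the coupling in the second paragraph — that the Fridman invariant produces Kobayashi balls inside $f_j(B^n)$ whose radius outgrows $\log\frac1{\delta(z_j)}$ exactly because of the $\delta(z_j)$-factor in the hypothesis, while Lemma~\ref{lem:kobayashi estimate} caps the Kobayashi distance from a fixed interior point by $\log\frac1{\delta(z_j)}+O(1)$, so that recentering at $z_0$ leaves a divergent surplus — together with the device of restricting $f_j^{-1}$ to the exhausting balls $D_j$ so as to invoke the stability lemma, rather than taking a Montel limit of $f_j$ (which degenerates since $f_j(0)=z_j\to\partial D$ and would require a separate argument that the limit map is onto $D$). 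The remaining ingredients — the connectedness bookkeeping for the $D_j$ and the action of $\mathrm{Aut}(B^n)$ on Kobayashi balls — are routine.
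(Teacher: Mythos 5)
Your proof is correct. Its first half --- extracting from the hypothesis injective maps $f_j:B^n\to D$, $f_j(0)=z_j$, whose images contain Kobayashi balls $B^K_D(z_j,r_j)$ with $r_j\geq\log\frac{1}{\delta(z_j)}+\log\frac{1}{\epsilon_j}$, and playing this off against the upper bound $d_D(\cdot,z_j)\leq\log\frac{1}{\delta(z_j)}+C$ of Lemma \ref{lem:kobayashi estimate} so that the surplus $\log\frac{1}{\epsilon_j}-O(1)$ diverges --- is exactly the paper's argument (you even avoid the paper's sign slip $2+\epsilon_j\delta(z_j)$ in place of $2-\epsilon_j\delta(z_j)$). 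Where you genuinely diverge is the endgame: the paper stops as soon as it knows every compact $A\subset D$ is eventually contained in $f_j(B^n)$, i.e.\ that $D$ is exhausted by $B^n$, and then invokes Theorem \ref{thm-intr:exhauting domain}; you instead recenter at a fixed $z_0$, restrict $\phi_j\circ f_j^{-1}$ to the Kobayashi sublevel sets $D_j$, and prove $s_{D_j}(z_0)\to 1$ directly, closing with Lemmas \ref{lem:stability squeezing} and \ref{lem:cha of ball by squeezing}. Your route is more self-contained: it bypasses the Montel/Rouch\'e dichotomy in the proof of Theorem \ref{thm:exhauting domain} and the boundary estimate of Lemma \ref{lem:squeezing tend 1} (which, for the exhausting domain $B^n$, reduces to the triviality $s_{B^n}\equiv 1$), at the price of the bookkeeping with the exhaustion $D_j$ and the automorphisms $\phi_j$; the paper's route is shorter on the page only because Theorem \ref{thm:exhauting domain} is already available. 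The one discrepancy in yours is cosmetic: in the paper's normalization of the Kobayashi distance (no factor $\tfrac12$, as used in defining $e_D$ and in Lemma \ref{lem:kobayashi estimate}), the Euclidean radius of $B^K_{B^n}(0,r)$ is $\tanh(r/2)$ rather than $\tanh r$, which is immaterial since both tend to $1$.
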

\begin{proof}
We will show that $D$ can be exhausted by the unit ball $B^n$.

Let $s_j=1-\epsilon_j \delta(z_j)$ and let
$$r_j=\log\frac{1+s_j}{1-s_j}=\log\frac{2+\epsilon_j \delta(z_j)}{\epsilon_j \delta(z_j)}.$$
By definition of $e_D(z)$, for each $j$, there is a holomorphic injective map
$f_j:B^n\ra D$ such that $f_j(0)=z_j$ and $B^K_D(z_j,r_j)\subset f_j(B^n)$,
where $B^K_D(z_j,r_j)$ is the ball with respect to the Kobayashi metric in $D$
with center $z_j$ and radius $r_j$.

Let $A$ be an arbitrary compact set in $D$.
By Lemma \ref{lem:kobayashi estimate} and the continuity of Kobayashi metric,
there is a constant $C>0$ independent of $z_j$ such that $d_D(A,z_j)\leq \log \frac{1}{\delta(z_j)}+C$ for $j$ sufficiently large,
where $d_D(\cdot,\cdot)$ is the Kobayashi distance on $D$.
This implies that $A\subset B^K_D(z_j,r_j)\subset f_j(B^n)$ for $j$ large enough.
Therefore $\Omega$ can exhausted by the unit ball $B^n$.
By Theorem \ref{thm-intr:exhauting domain}, $D$ is biholomoprhic to $B^n$.
\end{proof}

%\newpage
%\newpage
\maketitle
%\tableofcontents


\begin{thebibliography}{99}
\bibitem{Fornss} K. Diederich, J. E. Forn{\ae}ss, E. F. Wold, \emph{A characterization of the ball}, preprint, arXiv:1604.05057
\bibitem{Andersen-Lempert92}E. Anders\'en and L. Lempert, \emph{On the group of holomorphic automorphisms of $\mc^n$}, Invent. Math. 110 (1992), 371-388.
\bibitem{Arosio-Fornaess-Shcherbina-Wold17}L. Arosio, J. E. For{\ae}ss, N. Shcherbina, E. F. Wold, \emph{Squeezing functions and Cantor Sets}, e-print, arXiv: 1710.10305.
\bibitem{Bracci-Fornaess-Wold17}F. Bracci, J. E. For{\ae}ss, E. F. Wold, \emph{Comparison of invariant metrics and distances on strongly pseudoconvex domains and worm domains}, e-print, arXiv:1710.04192.
\bibitem{Deng-Fornaess-Wold16}F. Deng, J. E. Forn{\ae}ss, E. F. Wold, \emph{Exposing boundary points of strongly pseudoconvex subvarieties in complex spaces}, e-print, arXiv:1607.02755, to appear in Proc. Amer. Math. Soc..
\bibitem{Deng-Guan-Zhang12}F. Deng, Q. Guan, L. Zhang, \emph{On some properties of squeezing functions on bounded domains}, Pacific J. Math. V. 57, No.2 (2012), 319-342.
\bibitem{Deng-Guan-Zhang16}F. Deng, Q. Guan, L. Zhang, \emph{Properties of squeezing functions and global transformations of bounded domains}, Transactions of AMS, 368 (2016), 2679-2696.
\bibitem{Diederich-Fornaess15}K. Diederich, J.E. Forn{\ae}ss,\emph{ Boundary Behavior of the Bergman Metric}, e-preprint, arXiv:1504.02950.
\bibitem{Diederich-Fornaess77}K. Diederich, J.E. Forn{\ae}ss, \emph{Pseudoconvex domains: An example with nontrivial Nebenh\"{u}lle}, Math. Ann. 225(1977), 275-292.
\bibitem{Diedrich-Fornaess-Wold13}K. Diederich, J.E. Forn{\ae}ss, E.F. Wold, \emph{Exposing points on the boundary of a strictly pseudoconvex or a locally convexifiable domain of finite 1-type }, Journal of Geometric Analysis,  24 (2014), 2124-2134.
\bibitem{Diedrich-Fornaess-Wold16}K. Diederich, J.E. Forn{\ae}ss, E.F. Wold, \emph{A characterization of the ball in $\mc^n$}, International Journal of Mathematics, 27 (2016).
\bibitem{For04}J. E. Forn{\ae}ss, Short $\mc^k$, Adv. Stud. Pure Math., 42, Math. Soc. Japan, Tokyo, 2004. 32E10 (32E40).
\bibitem{Fornaess-Kim15} J.E. For{\ae}ss, K.T. Kim, \emph{Some problems}, Springer Proceedings in Mathematics \& statistics, 144(2015), 369-377.
\bibitem{Fornaess-Rong16}J. E. For{\ae}ss, F. Rong, \emph{Estimate of the squeezing function for a class of bounded domains}, e-preprint, arXiv:1606.01335, to appear in Math. Ann..
\bibitem{Fornaess-Shcherbina} J. E. For{\ae}ss, N. Shcherbina, \emph{A domain with non-plurisubharmonic squeezing function}, e-preprint, arXiv:1604.01480.
\bibitem{Fornaess-Wold15}J.E. Forn{\ae}ss, E.F. Wold, \emph{ An estimate for the squeezing function and estimates of invariant metrics.} Complex Analysis and Geometry, Springer Proceedings in Mathematics \& statistics, 144(2015), 135- 147.
\bibitem{Fornaess-Wold16-preprint}J.E. Forn{\ae}ss, E.F. Wold, \emph{A non-strictly pseudoconvex domain for which the squeezing function tends to one towards the boundary}, e-preprint, arXiv:1611.04464.
%\bibitem{Frankel91}S. Frankel, Applications of affine geometry to geometric function theory in several complex variables, I: Convergent rescalings and
%intrinsic quasi-isometric structure, Several Complex Variables and Complex Geometry, Part 2 (Santa Cruz, CA, 1989), 183-208, Proc.
%Sympos. Pure Math., 52, Part 2, Amer. Math. Soc., Providence, RI, 1991.
\bibitem{Fri83} B. L. Fridman, \emph{Biholomorphic invariants of a hyperbolic manifold and some application}, Trans.Amer.Math.Soc., 276, 685-698 (1983).
\bibitem{Fri86}B. L. Fridman, A universal exhausting domain,  Proc. Amer. Math. Soc. 98 (1986), no. 2, 267¨C270.
%\bibitem{Fridman-Ma95}B. L. Fridman, D. Ma, \emph{On exhaustion of domains}, Indiana Univ. Math. J., V. 44, No. 2, (1995), 385-395.
\bibitem{Joo-Kim16}S. Joo, K.-T. Kim,\emph{ On boundary points at which the squeezing function tends to one}, e-print, arXiv:1611.08356.
\bibitem{Kim-Zhang16} K.-T. Kim, L. Zhang, \emph{On the uniform squeezing property and the squeezing function}, Pacific J. Math. 282 (2016), 341-358.
\bibitem{Liu-Sun-Yau04}K. Liu, X. Sun, S.T. Yau, \emph{Canonical metrics on the moduli space of Riemann Surfaces I}, J. Differential Geom. Vol. 68 (2004), 571-637.
\bibitem{Liu-Sun-Yau05}K. Liu, X. Sun, S.T. Yau, \emph{Canonical metrics on the moduli space of Riemann Surfaces II} , J. Differential Geom. Vol. 69 (2005), 163-216.
\bibitem{Llo79}
N.~G. Lloyd.
\newblock Remarks on generalising {R}ouch\'{e}'s theorem.
\newblock {\em J. London Math. Soc. (2)}, 20(2):259--272, 1979.
\bibitem{Nikolov17}N. Nikolov, \emph{Behavior of the squeezing function near h-extendible boundary points}, Proc. Amer. Math. Soc. 146 (2018), No 8, 3455-3457.
\bibitem{Nikolov-Andreev16}N. Nikolov, L. Andreev, \emph{Boundary behavior of the squeezing functions of complex domains},Int. J. Math. 28 (2017), No 5.
\bibitem{Nik-Try18}N. Nikolov, M. Trybula, Estimates for the squeezing function near strictly pseudoconvex boundary points with applications, arXiv:1808.07892.
\bibitem{Nik-Ver18} N. Nikolov, K. Verma, On the squeezing function and Fridman invariants, e-preprint, arXiv:181010739.
\bibitem{Yeung09}S. K. Yeung, \emph{Geometry of domains with the uniform squeezing property}, Adv.  Math. 221 (2009) 547-569.
\bibitem{Zhang15}L. Zhang, \emph{On curvature estimates of bounded domains}, Complex Analysis and Geometry, Springer Proceedings in Mathematics \& statistics, 144(2015), 353-367.
\bibitem{Zimmer16}A. Zimmer, \emph{A gap theorem for the complex geometry of convex domains}, e-preprint, arXiv:1609.07050, to appear in Transactions of the AMS.
\bibitem{Zimmer17}A. Zimmer, \emph{Characterizing strong pseudoconvexity, obstructions to biholomorphisms, and Lyapunov exponents}, e-preprint, arXiv:1703.01511.
\bibitem{Zimmer18}A. Zimmer, \emph{Smoothly bounded domains covering finite volume manifolds}, e-preprint, arXiv:1802.01178.
\end{thebibliography}
\end{document}